\numberwithin{equation}{section}
\title[New Upper Bound for Growth Factor in Gaussian Elimination]{A New Upper Bound For the Growth Factor in Gaussian Elimination with Complete Pivoting}
\author{Ankit Bisain}
\address{\scriptsize Department of Mathematics, Massachusetts Institute of Technology, Cambridge, MA, 02139 USA.
}
\email{ankitb12@mit.edu}
\author{Alan Edelman}
\email{edelman@mit.edu}
\author{John Urschel}
\email{urschel@mit.edu}
\subjclass[2020]{Primary 65F05, 15A23.}
\newtheorem{theorem}{Theorem}[section]
\newtheorem{lemma}[theorem]{Lemma}
\newtheorem{proposition}[theorem]{Proposition}
\begin{document}

\begin{abstract}
    The growth factor in Gaussian elimination measures how large the entries of an LU factorization can be relative to the entries of the original matrix. It is a key parameter in error estimates, and one of the most fundamental topics in numerical analysis. We produce an upper bound of $n^{0.2079 \ln n +0.91}$ for the growth factor in Gaussian elimination with complete pivoting -- the first improvement upon Wilkinson's original 1961 bound of $2 \, n ^{0.25\ln n +0.5}$. 
\end{abstract}

\maketitle

\section{Introduction}

The solution of a linear system $Ax = b$ is one of the oldest problems in mathematics. One of the most fundamental and important techniques for solving a linear system is Gaussian elimination, in which a matrix is factored into the product of a lower and upper triangular matrix. Given an $n \times n$ matrix $A$, Gaussian elimination performs a sequence of rank-one transformations, resulting in the sequence of matrices $A^{(k)}\in \mathbb{C}^{k \times k}$ for $k$ equals $n$ to $1$, satisfying
\begin{equation}\label{eqn:block_def}
A^{(k)} = M^{(2,2)} - M^{(2,1)} [M^{(1,1)}]^{-1} M^{(1,2)},  \qquad \text{where } A = \begin{blockarray}{ccc}
         {\scriptstyle n-k} & {\scriptstyle k} &  \\
       \begin{block}{[cc]c}
         M^{(1,1)} & M^{(1,2)} &{ \scriptstyle n-k} \\ 
          M^{(2,1)} & M^{(2,2)} & {\scriptstyle k} \\
       \end{block}
     \end{blockarray}. 
     \end{equation}
The resulting LU factorization of $A$ is encoded by the first row and column of each of the iterates $A^{(k)}$, $k = 1,...,n$. Not all matrices have an LU factorization, and a permutation of the rows (or columns) of the matrix may be required. In addition, performing computations in finite precision can elicit issues due to round-off error. The error due to rounding in Gaussian elimination for a matrix $A$ in some fixed precision is controlled by the growth factor of the Gaussian elimination algorithm, defined by
\[g(A):= \frac{\max_{k} \|A^{(k)}\|_{\max}}{ \|A\|_{\max}},\]
where $\|\cdot\|_{\max}$ is the entry-wise matrix infinity norm (see \cite[Theorem 3.3.1]{golub2013matrix} for details\footnote{Here, we study the growth factor in exact arithmetic, while it is growth factor in floating point arithmetic that occurs in error estimates. The worst-case behavior of these two quantities is very similar (see \cite[Theorem 1.5]{edelman2024some}).}). For this reason, understanding the growth factor is of both theoretical and practical importance. Complete pivoting, famously referred to as ``customary" by von Neumann \cite{von1947numerical}, is a strategy for permuting the rows and columns of a matrix so that, at each step, the pivot (the top-left entry of $A^{(k)}$) is the largest magnitude entry of $A^{(k)}$. Complete pivoting remains the premier theoretical permutation strategy for performing Gaussian elimination. Despite its popularity, the worst-case behavior of the growth factor under complete pivoting is poorly understood. This is in stark contrast to partial pivoting, an alternative strategy which is incredibly popular in practice but known to be horribly unstable in the worst case (see Wilkinson's 1965 {\it The Algebraic Eigenvalue Problem} \cite[pg. 212]{Wilkinson1965AEP}). Here, we focus exclusively on the pure mathematical problem of the growth factor under complete pivoting. For the reader interested in the engineering aspects of solving a linear system, a detailed discussion of the role of the growth factor and complete vs partial pivoting in modern practice is provided in Appendix \ref{app:a}. 


\subsection{Historical Overview and Relevant Results} In their seminal 1947 paper {\it Numerical Inverting of Matrices of High Order}, von Neumann and Goldstine studied the stability of Gaussian elimination with complete pivoting \cite{von1947numerical}. This work was motivated by their development of the first stored-program digital computer and desire to understand the effect of rounding in computations on it \cite{meyer}. Goldstine later wrote:
\begin{quote}Indeed, von Neumann and I chose this topic for the first modern paper on numerical analysis ever written precisely because we viewed the topic as being absolutely basic to numerical mathematics \cite{goldstine1993computer}.
\end{quote}
However, it was not until Wilkinson's 1961 paper {\it Error Analysis of Direct Methods of Matrix Inversion} that a more rigorous analysis of the backward error in Gaussian elimination due to rounding errors occurred. Indeed, Wilkinson was the first to fully recognize the dependence of this error on the growth factor. Let $g_n(\mathbb{R})$ and $g_n(\mathbb{C})$ denote the maximum growth factor under complete pivoting over all non-singular $n \times n$ real and complex matrices, respectively. Wilkinson produced a bound for the growth factor under complete pivoting using only Hadamard's inequality \cite[Equation 4.15]{wilkinson1961error}:
\begin{equation}\label{eqn:wilk}
g_n(\mathbb{C}) \le \sqrt{n} \big( 2 \; 3^{1/2} \; ... \; n^{1/(n-1)} \big)^{1/2} \le 2 \sqrt{n} \, n^{\ln (n)/4},
\end{equation}
where the second inequality is asymptotically tight. This estimate was considered extremely pessimistic, with Wilkinson himself noting that ``no matrix has been encountered for which [the growth factor for complete pivoting] was as large as 8 \cite{wilkinson1961error}." A conjecture that the growth factor for complete pivoting of a real $n \times n$ matrix was at most $n$ was eventually formed (see \cite[Section 1.1]{edelman2024some} for a detailed discussion of the conjecture and its possible mis-attribution to both Cryer and Wilkinson). According to Higham, in his now-classic text {\it Accuracy and Stability of Numerical Algorithms}, this conjecture ``became one of the most famous open problems in numerical analysis, and has been investigated by many mathematicians \cite[pg. 181]{higham2002accuracy}." Many researchers attempted to upper bound the growth factor, with $g_n(\mathbb{R})$ computed exactly for $n = 1,2,3,4$ and shown to be strictly less than five for $n = 5$ (see the works of Tornheim  \cite{tornheim1964maximum,tornheim1965maximum,
tornheim1969maximum,tornheim1970maximum}, Cryer
\cite{cryer1968pivot}, and Cohen \cite{cohen1974note} for details). However, no progress was made on improving the bound for arbitrary $n$. Many years later, in 1991, Gould found a $13 \times 13$ matrix with growth factor larger than $13$ in finite precision \cite{gould1991growth} (extended to exact arithmetic by Edelman \cite{edelman1992complete}), providing a counterexample to the conjecture for $n = 13$. Recently, Edelman and Urschel 
 improved the best-known lower bounds for all $n>8$ and showed that 
\[g_n(\mathbb{R}) \ge 1.0045\, n \;  \text{ for all } n \ge 11, \quad\text{ and } \quad \limsup_n \big(g_n(\mathbb{R}) /n\big) \ge 3.317, \]
thus disproving the aforementioned conjecture for all $n \ge 11$ by a multiplicative factor \cite{edelman2024some}. However, for the upper bound, to date no improvement has been made to Wilkinson's bound.

\subsection{Our Contributions} In this work, we improve Wilkinson's upper bound by an exponential constant, the first improvement in over sixty years. In particular, we prove the following theorem, obtaining a leading exponential constant of $\tfrac{1}{2[2+(2-\sqrt{2})\ln 2]} \approx 0.20781$.

\begin{theorem}\label{thm:main} 
$g_n(\mathbb{C}) \le  n^{\tfrac{\ln n}{2[2+(2-\sqrt{2})\ln 2]} +0.91}.$
\end{theorem}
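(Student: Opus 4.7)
The plan is to refine Wilkinson's Hadamard-based argument by exploiting the Schur-complement structure at all intermediate scales, and then to set up a one-parameter optimization whose minimum yields the constant $\tfrac{1}{2[2+(2-\sqrt{2})\ln 2]}$. After normalizing $\|A\|_{\max}=1$, I denote the magnitudes of the diagonal entries of $U$ in Gaussian elimination with complete pivoting by $p_1,\ldots,p_n$, so that $g(A)=\max_k p_k$. The starting observation is that, under complete pivoting, every entry of the Schur complement $A^{(n-a+1)}$ is bounded by $p_a$; applying Hadamard's inequality to its leading $(b-a+1)\times(b-a+1)$ principal submatrix therefore yields the two-parameter family of bounds
\[
\prod_{i=a}^{b} p_i \;\le\; (b-a+1)^{(b-a+1)/2}\, p_a^{\,b-a+1},\qquad 1\le a\le b\le n,
\]
with the case $a=1$ (and $p_1\le 1$) recovering Wilkinson's original estimate \eqref{eqn:wilk}. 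These ``multi-scale'' Hadamard estimates are the only extra ingredient I would need beyond the classical argument.

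First I would isolate the index $K$ at which the pivot sequence attains its maximum, so that $g_n=p_K$. I would then partition $[1,K]$ into stages $1=k_0<k_1<\cdots<k_L=K$ and apply, in each stage, the Hadamard bound above with $a=k_{j-1}$ and $b=k_j$; dividing out the earlier pivots gives an inequality controlling $p_{k_j}$ (or, more delicately, a weighted geometric mean of the pivots over the stage) from above in terms of $p_{k_{j-1}}$ and the stage length $k_j-k_{j-1}$. Taking logarithms produces a cost function $F(k_{j-1},k_j)$ on each stage, and bounding $\log g_n=\log p_K$ reduces to minimizing $\sum_j F(k_{j-1},k_j)$ over schedules of breakpoints.

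Next I would pass to a continuum limit, parametrizing the schedule by a continuous profile of stage-lengths and solving the resulting variational problem. The constant $2-\sqrt{2}$ is expected to emerge as the value of an integral of the form $\int_{1/2}^{1}(1-x)^{-1/2}\,dx=2-\sqrt{2}$, i.e.\ from balancing two competing terms at an optimum where a relevant exponent equals $1/\sqrt{2}$, while the accompanying $\ln 2$ is the natural-logarithm conversion of a dyadic rescaling that appears in the optimal schedule. Assembling the stage-wise estimates and reading off leading and subleading orders should give $\log g_n \le \tfrac{(\log n)^2}{2[2+(2-\sqrt{2})\ln 2]}+O(\log n)$, and a careful accounting of the boundary effects provides the explicit additive constant $0.91$.

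The main obstacle I expect is this last step: packaging the multi-scale Hadamard bounds into a variational problem whose minimum has a clean closed form, rather than an implicit or numerically-computed one. A secondary difficulty is to control the lower-order contributions tightly enough to obtain the stated additive constant, and to verify that the optimal schedule is well-defined -- in particular, that refining the partition further (taking $L\to\infty$ while $k_j-k_{j-1}\to\infty$) neither collapses the bound nor loses the improvement over Wilkinson.
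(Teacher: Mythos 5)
Your multi-scale Hadamard bounds, while valid, do not improve on Wilkinson's bound asymptotically, so the plan cannot reach the stated constant. To see this concretely, translate your constraint into the paper's indexing (where $A^{(m)}$ denotes the $m\times m$ Schur complement with pivots $p_1,\dots,p_m$ and max entry $p_m$): applying Hadamard's inequality to the leading $r\times r$ principal submatrix of $A^{(m)}$ gives, with $j=m-r+1$,
\[
\prod_{i=j}^m p_i \le (m-j+1)^{(m-j+1)/2}\,p_m^{\,m-j+1}.
\]
Testing this against the profile $q_x-q_1=-\gamma\ln^2 x$ as in Subsection \ref{sub:tight}, writing $j=tm$ and comparing coefficients of $m\ln m$, the constraint forces
\[
\gamma \le \frac{1-t}{4\bigl(1-t+t\ln t\bigr)},
\]
and the right-hand side is increasing on $(0,1)$ (its derivative has sign $t-1-\ln t>0$), with infimum $1/4$ attained as $t\to 0^{+}$ --- which is exactly Wilkinson's constraint. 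Every other member of your family is asymptotically weaker, so the Wilkinson-extremal profile $q_x-q_1=-\tfrac14\ln^2 x+O(\ln x)$ remains feasible for your enlarged LP and its value still grows like $\tfrac14\ln^2 n$.

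What the paper uses instead is qualitatively different: its new constraints involve \emph{two} pivots, $p_k$ and the smaller $p_{k+\ell}$, with the smaller one carrying the large exponent $k-\ell$,
\[
\prod_{i=1}^k p_i \le \frac{k^k\,p_{k+\ell}^{\,k-\ell}\,(p_k+p_{k+\ell})^\ell}{(k-\ell)^{(k-\ell)/2}\,\ell^{\ell/2}}.
\]
No single application of classical Hadamard to a submatrix of a Schur complement produces such an estimate. It is obtained by writing $A^{(k)}$ as a rank-$\ell$ perturbation of the trailing $k\times k$ block of $A^{(k+\ell)}$ (the Schur-complement identity) and then invoking the generalized Hadamard inequality of Lemma \ref{lm:det_bound} for a difference $X-Y$ with $\|X\|_F,\|Y\|_F$ controlled and $\mathrm{rank}(Y)\le\ell$, which in turn rests on the singular-value bound of Proposition \ref{prop:sing_bound}. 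The same asymptotic test on this constraint yields $\gamma\le 1/[4(1+(1-t)\ln(1+t))]$, which dips strictly below $1/4$ for $0<t<1$ and at $t=\sqrt2-1$ gives the constant in Theorem \ref{thm:main}; this is also the true source of the numerology, since $(1-t)\ln(1+t)\big|_{t=\sqrt2-1}=(2-\sqrt2)\tfrac{\ln 2}{2}$. (Your proposed $\int_{1/2}^1(1-x)^{-1/2}\,dx$ in fact equals $\sqrt 2$, not $2-\sqrt 2$, and does not appear in the argument.) So the missing ingredient is not a cleverer schedule of ordinary Hadamard bounds across scales, but a genuinely sharper determinant inequality that exploits the low rank of the Schur-complement update, followed by the linearization and LP-duality steps needed to extract the explicit $O(\ln n)$ term.
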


Our proof consists of four parts: 
\begin{enumerate}
    \item A Generalized Hadamard's inequality:  We prove a tighter version of Hadamard's famous inequality for matrices
with a large low-rank component. This generalization allows for a more sophisticated
analysis of the iterates of Gaussian elimination, providing additional constraints on the pivots of a matrix. (Subsection \ref{sub:det})
\item An Improved Optimization Problem: Applying the improved Hadamard inequality produces an optimization problem that can be considered a refinement of the optimization problem associated with Wilkinson's proof. Unfortunately, this refinement is no longer linear upon a logarithmic transformation. (Subsection \ref{sub:improvedopt})
\item From Non-Linear to Linear: We relax the logarithmic transformation of our optimization problem to a linear program, and prove that the optimal value of our relaxation has the same asymptotic behavior. (Subsection \ref{sub:improvedlp})
\item An Asymptotic Analysis: Finally, we analyze the asymptotic behavior of our linear program by converting it into a continuous program and applying a duality argument, thus producing the improved bound in Theorem \ref{thm:main}. (Section \ref{sec:lpbound})
\end{enumerate}

Our proof considers the same information regarding the underlying matrix as Wilkinson's original bound, using only the pivots at each step of elimination, and reveals further structure regarding the relationships between them. Our technique increases our understanding of the mathematical forces that constrain entries from increasing in size during Gaussian elimination, by illustrating the trade-off between having entries that grow quickly in size and having a matrix of large numerical rank (e.g., many large singular values). Improved estimates on the explicit constants in Theorem \ref{thm:main} can be obtained through a refinement of the techniques presented herein. However, tight estimates on the maximum growth factor will likely require further information regarding matrix entries. 

The techniques employed here can likely be used to improve upper bounds for the growth factor problem under other pivoting strategies (e.g., rook pivoting, threshold pivoting, etc.). We leave this natural extension to the interested reader.

\subsection{Notation and Basic Observations}

Recall that $A^{(k)}$, defined in Equation \ref{eqn:block_def}, denotes the $k \times k$ matrix resulting from the $(n-k)^{th}$ step of Gaussian elimination, and let $p_k$ denote the pivot of $A^{(k)}$ for $k = 1,\ldots,n$. Let $\langle \cdot, \cdot \rangle_F$ and $\| \cdot \|_F$ denote the Frobenius inner product and norm. Gaussian elimination under complete pivoting permutes the rows and columns of a matrix $A$ so that $p_k = \|A^{(k)}\|_{\max}$ for all $k$. Without loss of generality, we may assume $A$ is already completely pivoted, removing the need for pivoting in analysis. For complete pivoting, the growth factor is given by $\max_k p_k / p_n$, as $p_k = \|A^{(k)}\|_{\max}$, $k = 1, \ldots, n$, and $p_n = \|A\|_{\max}$. Because we are interested in the maximum growth factor over all $n \times n$ matrices, it suffices to consider the maximum value of $p_1/p_n$ \cite[Proposition 2.9]{day1988growth}. 

\section{Wilkinson's Bound Viewed as a Linear Program} \label{sec:wilk}

The proof of Wilkinson's 1961 bound is incredibly short, requiring one page of mathematics and using only Hadamard's inequality applied to the matrix iterates $A^{(k)} \in \mathbb{C}^{k \times k}$ of Gaussian elimination and the well-known fact that the product of pivots for a matrix equals its determinant. Hadamard's inequality, that the modulus of the determinant of a matrix is at most the product of the two-norm of its columns, implies that 
\begin{equation}\label{ineq:wilk_const}
    \prod_{i=1}^k p_i = \det(A^{(k)}) \le k^{k/2} |A^{(k)}|_\infty^k = k^{k/2} p_k^k.
\end{equation} 
The maximum $k^{th}$ pivot, viewed as a function of $k$, is non-decreasing, and so the maximum value of $p_1/p_n$ under these constraints provides an upper bound for the maximum growth factor:

\begin{tcolorbox}
\begin{equation}\label{prog:wilko}
\begin{gathered}
\textbf{Wilkinson's Optimization  Problem} 
 \\[1ex]
\begin{array}{cll}
\textrm{max} \quad & p_1/p_n & \\
\textrm{s.t.} \quad & \prod_{i = 1}^k p_i \le  k^{k/2}p_k^k   &\text{for } k = 1,...,n.\\
\end{array}
\end{gathered}
\end{equation}
\end{tcolorbox}
\vspace{1 mm}

\noindent Performing the transformation $q_k = \ln (p_k)$ for $k =1,...,n$ produces the linear program (LP): 

\begin{tcolorbox}
\begin{equation}\label{prog:wilkl}
\begin{gathered}
\textbf{Wilkinson's Linear Program} 
 \\[1ex]
\begin{array}{cll}
\textrm{max} \quad & q_1 - q_n & \\
\textrm{s.t.} \quad & \sum_{i = 1}^k q_i \le \frac{k}{2} \ln k + k \, q_k   &\text{for } k = 1,...,n.\\
\end{array}
\end{gathered}
\end{equation}
\end{tcolorbox}
\vspace{1 mm}

\noindent Wilkinson's proof, though never stated in the context of linear programming, can be viewed as a simple LP duality argument for the linear program maximize  $c^T x$ subject to $A x \le b$, where $x = (q_1,\ldots,q_n)^T$, $c = (1,0,\ldots,0,-1)^T$, and $Ax \le b$ encodes the inequalities of Linear Program \ref{prog:wilkl}: 
\[ A x  = \begin{pmatrix}  -1 & &  &  &  \\[.25ex] 1 & -1 & & &  \\[.25ex] 1 & 1& -2 & & \\[.25ex] \vdots & \vdots & \ddots & \ddots& \\[.25ex] 1 &1 & \cdots &1 &-(n-1)\end{pmatrix} \begin{pmatrix} q_1 \\[.25ex] q_2 \\[.25ex] q_3 \\[.25ex] \vdots \\[.25ex] q_n \end{pmatrix}\le \begin{pmatrix} 0 \\[.25ex] \tfrac{2}{2} \ln 2 \\[.25ex] \tfrac{3}{2} \ln 3 \\[.25ex] \vdots \\[.25ex] \tfrac{n}{2} \ln n \end{pmatrix} =  b.\]
We note that the additional constraint $q_1 \ge 0$ plays no role, as the feasible region of Program \ref{prog:wilkl} is shift-independent. The matrix $A$ has an easily computable inverse with $A^{-1}_{i1} = -1$ for $i = 1,...,n$, $A^{-1}_{ii} = - \tfrac{1}{i-1}$ for $i = 2,...,n$, and $A^{-1}_{ij} = - \tfrac{1}{j(j-1)}$ for $i>j$. The quantity
\[[A^{-1}]^T c  =  \begin{pmatrix}  -1 &-1 & -1 & \cdots & -1  \\[.25ex] & -1 & -\tfrac{1}{2} & \cdots & -\tfrac{1}{2}  \\[.25ex] &  & -\tfrac{1}{2} & &\vdots \\[.25ex] &  & & \ddots&-\tfrac{1}{(n-2)(n-1)} \\[.25ex]  & &  & &-\tfrac{1}{n-1} \end{pmatrix}\begin{pmatrix} 1 \\[.25ex] 0 \\[.25ex] \vdots \\[.25ex] 0\\[.25ex] -1 \end{pmatrix} = \begin{pmatrix} 0 \\[.25ex] \tfrac{1}{2} \\[.25ex] \vdots \\[.25ex] \tfrac{1}{(n-2)(n-1)} \\[.25ex] \tfrac{1}{n-1} \end{pmatrix} \]
is entry-wise non-negative, implying Wilkinson's bound
\[ q_1 - q_n = \big( [A^{-1}]^T c \big)^T A x  \le \big( [A^{-1}]^T c \big)^T b = \frac{1}{2} \bigg[ \ln n + \sum_{k=2}^n \frac{\ln k}{k-1} \bigg].\]
This bound is the exact solution to Program \ref{prog:wilkl}, evidenced by the matching feasible point $x = A^{-1} b$. The ease with which the optimal point of the dual program can be obtained is due to the simple structure of the constraints. Our improved linear program, described in Subsection \ref{sub:improvedlp}, has a more complicated set of constraints, requiring a more complex duality argument (given in Section \ref{sec:lpbound}).

This same argument also immediately produces bounds for the geometric mean growth factor of the iterates $A^{(k)}$, a key quantity in our proof of Theorem \ref{thm:main} that may be of independent interest. Indeed, the quantity $\tfrac{1}{n} \sum_{k=1}^{n} (q_1 - q_k)$ can be upper bounded by analyzing the linear program:
\vspace{.5 mm}

\begin{tcolorbox}
\begin{equation}\label{prog:geomean}
\begin{gathered}
\textbf{Geometric Mean Growth LP} 
 \\[1ex]
\begin{array}{cll}
\textrm{max} \quad & \frac{1}{n} \sum_{k = 1}^n (q_1 - q_k) & \\[.25ex]
\textrm{s.t.} \quad & \sum_{i = 1}^k q_i \le \frac{k}{2} \ln k + k \, q_k   &\text{for } k = 1,...,n.\\
\end{array}
\end{gathered}
\end{equation}
\end{tcolorbox}
\vspace{1 mm}
\noindent The constraints of this linear program are identical to those of Program \ref{prog:wilkl}. The only difference is in the objective; here we have $c = \big(\tfrac{n-1}{n},-\tfrac{1}{n},...,-\tfrac{1}{n}\big)^T$. Nevertheless, the quantity
\[[A^{-1}]^T c  = 
\begin{pmatrix*}  -1 &-1 &   -1  & \cdots & -1  \\[.25ex] & -1 & -\tfrac{1}{2} & \cdots & -\tfrac{1}{2}  \\[.25ex] &  & -\tfrac{1}{2} & &\vdots \\[.25ex] &  & & \ddots&-\tfrac{1}{(n-2)(n-1)} \\[.25ex]  & &  & &-\tfrac{1}{n-1} \end{pmatrix*}
\begin{pmatrix*} \tfrac{n-1}{n} \\[.25ex] -\tfrac{1}{n} \\[.25ex] \vdots \\[.25ex] -\tfrac{1}{n}\\[.25ex] -\tfrac{1}{n} \end{pmatrix*} = \begin{pmatrix} 0 \\[.25ex] \tfrac{1}{2} \\[.25ex] \vdots \\[.25ex] \tfrac{1}{(n-2)(n-1)} \\[.25ex] \tfrac{1}{(n-1)n} \end{pmatrix} \]
is entry-wise non-negative, implying the bound
\begin{equation}\label{ineq:avg_bound} \frac{1}{n} \sum_{k = 1}^n (q_1 - q_k) = \big( [A^{-1}]^T c \big)^T A x  \le \big( [A^{-1}]^T c \big)^T b = \frac{1}{2}\sum_{k=2}^n \frac{\ln k}{k-1}  \le \frac{\ln^2 n}{4} + \ln 2,
\end{equation}
or, in terms of the original pivots, 
\[ \Bigg[\prod_{k=1}^n \frac{p_1}{p_k} \Bigg]^{\tfrac{1}{n}} = \frac{p_1}{\left( \prod_{k=1}^n p_k \right)^{1/n}} \le 2 n^{\tfrac{1}{4} \ln n}. \]
This can be easily generalized further to any weighted average $\sum_{k=1}^n w_k (q_1 - q_k)$ of the logarithmic growth factors.

\section{An Improved Linear Program}

In this section, we produce additional constraints that the pivots must satisfy by generalizing Hadamard's inequality for matrices with a large low-rank component. These constraints, applied to the matrix $A^{(k)}$ (viewed as a sub-matrix of $A^{(k+\ell)}$ plus a rank $\ell$ matrix), lead to a new linear program with optimal value at most $0.2079 \ln^2 n + O(\ln n)$, the first improvement to the exponential constant of $0.25$ in Wilkinson's bound (Inequality \ref{eqn:wilk}).

\subsection{Improved Determinant Bounds}\label{sub:det}

First, we recall the following basic proposition, itself a corollary of \cite[Theorem 1]{li1995determinant}. \footnote{Proposition \ref{prop:sing_bound} also follows from applying standard determinant bounds for Hermitian matrices \cite[Theorem VI.7.1]{bhatia2013matrix} to $\begin{psmallmatrix} 0 & A \\ A^* & 0 \end{psmallmatrix}$ and $\begin{psmallmatrix} 0 & B \\ B^* & 0 \end{psmallmatrix}$, and using the following well-known rearrangement inequality: for any $a_1 \ge ... \ge a_n \ge 0$, $b_1 \ge ... \ge b_n \ge 0$, and $\pi \in \mathrm{S}_n$, $\prod_{i=1}^n (a_i + b_{\pi(i)}) \le \prod_{i=1}^n (a_i + b_{n-i+1})$.}

\begin{proposition}\label{prop:sing_bound}
$|\det(A+B)| \le  \prod_{i=1}^n \big(\sigma_i(A)+ \sigma_{n-i+1}(B)\big)$ for all $A, B \in \mathbb{C}^{n \times n}$, where $\sigma_1(A) \ge ... \ge \sigma_n(A)$ and $\sigma_1(B) \ge ... \ge \sigma_n(B)$ are the singular values of $A$ and $B$.
\end{proposition}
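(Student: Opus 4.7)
The cleanest path is to recognize the statement as a specialization of \cite[Theorem 1]{li1995determinant}, which establishes the same bound for general (rectangular) $A,B$, and simply cite it. If a self-contained argument is preferred, my plan is to follow the route sketched in the footnote via Jordan--Wielandt dilations, as outlined below.

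First I would form the $2n\times 2n$ Hermitian dilations $\tilde A = \begin{psmallmatrix} 0 & A \\ A^* & 0 \end{psmallmatrix}$ and $\tilde B = \begin{psmallmatrix} 0 & B \\ B^* & 0 \end{psmallmatrix}$ and record three standard facts: (i) the eigenvalues of $\tilde A$ (resp.\ $\tilde B$) are $\pm\sigma_1(A),\dots,\pm\sigma_n(A)$ (resp.\ $\pm\sigma_1(B),\dots,\pm\sigma_n(B)$); (ii) $\tilde A+\tilde B$ is the dilation of $A+B$, with eigenvalues $\pm\sigma_1(A+B),\dots,\pm\sigma_n(A+B)$; and (iii) $|\det(\tilde A+\tilde B)| = |\det(A+B)|^2$. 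Next I would apply the Hermitian-matrix determinant inequality \cite[Theorem VI.7.1]{bhatia2013matrix} to $\tilde A$ and $\tilde B$, obtaining an upper bound on $|\det(\tilde A+\tilde B)|$ expressed as a product of $2n$ nonnegative terms, each of which is a combination of an eigenvalue of $\tilde A$ with a matched eigenvalue of $\tilde B$.

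Exploiting the sign symmetry of the spectra of $\tilde A$ and $\tilde B$, I would pair the eigenvalues so that each sum $\sigma_i(A)+\sigma_{n-i+1}(B)$ appears exactly twice in the product (once from the positive-positive match and once from the negative-negative match), and invoke the rearrangement inequality from the footnote to certify that this reverse-order pairing dominates every other admissible matching. This yields $|\det(A+B)|^2 \le \prod_{i=1}^n \bigl(\sigma_i(A)+\sigma_{n-i+1}(B)\bigr)^2$, and the proposition then follows by taking square roots.

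I expect the main obstacle to be the combinatorial bookkeeping: translating the canonical pairing built into Bhatia's bound into the desired reverse-order singular-value pairing while correctly tracking signs, so that each of the $n$ target factors appears exactly twice without any spurious absolute-value issues. Once the correct pairing is identified, the rest of the argument reduces to one application of the rearrangement inequality and a final square root.
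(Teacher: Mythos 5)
Your proposal takes essentially the same approach as the paper: the paper presents Proposition~\ref{prop:sing_bound} as a corollary of \cite[Theorem 1]{li1995determinant} and offers no self-contained proof, and its footnote sketches exactly the Jordan--Wielandt dilation route you describe as the alternative. (One small inaccuracy: Li and Mathias's result is for square matrices, as the determinant of a rectangular matrix is undefined, but this does not affect the citation.)
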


Next, we produce a generalized version of Hadamard's inequality for matrices with a large low-rank component. Here and in what follows, we use the convention that $0^0 = 1$.

\begin{lemma}\label{lm:det_bound}
    Let $A,B \in \mathbb{C}^{n\times n}$ with $\|A\|_F \le n$ and $\|B\|_F \le C n$, and $\mathrm{rank}(B)\le \ell$. Then
    \[ |\det(A+B)|\le \frac{n^n}{(n-\ell)^{\frac{n-\ell}{2}}\ell^{\frac{\ell}{2}}} \big(1+C\big)^\ell. \]
\end{lemma}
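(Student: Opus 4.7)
The plan is to invoke Proposition \ref{prop:sing_bound} and exploit the rank hypothesis on $B$ to truncate the resulting product, leaving two groups of terms which can each be bounded by AM-GM on squared singular values. Since $\mathrm{rank}(B) \le \ell$, we have $\sigma_j(B) = 0$ for all $j > \ell$, so $\sigma_{n-i+1}(B) = 0$ whenever $i \le n-\ell$. Substituting into Proposition \ref{prop:sing_bound} and re-indexing the tail by $j = n-i+1$ gives
\[ |\det(A+B)| \le \prod_{i=1}^{n-\ell} \sigma_i(A) \;\cdot\; \prod_{j=1}^{\ell} \bigl(\sigma_{n-j+1}(A) + \sigma_j(B)\bigr). \]

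For the first product, I would apply AM-GM to the squared terms: writing $T_1 := \sum_{i=1}^{n-\ell} \sigma_i(A)^2 \le \|A\|_F^2 \le n^2$, one gets
\[ \prod_{i=1}^{n-\ell} \sigma_i(A) \;\le\; \Bigl(\tfrac{T_1}{n-\ell}\Bigr)^{(n-\ell)/2} \;\le\; \frac{n^{n-\ell}}{(n-\ell)^{(n-\ell)/2}}. \]
For the second product, I would again apply AM-GM to squared factors and then expand. Letting $T_2 := \sum_{i=n-\ell+1}^n \sigma_i(A)^2$, the cross term $2\sum_{j=1}^\ell \sigma_{n-j+1}(A)\sigma_j(B)$ is controlled by Cauchy--Schwarz to give $\sum_{j=1}^{\ell} (\sigma_{n-j+1}(A)+\sigma_j(B))^2 \le \bigl(\sqrt{T_2}+\|B\|_F\bigr)^2 \le (n + Cn)^2$, whence
\[ \prod_{j=1}^\ell \bigl(\sigma_{n-j+1}(A)+\sigma_j(B)\bigr) \;\le\; \frac{n^\ell (1+C)^\ell}{\ell^{\ell/2}}. \]
Multiplying the two estimates produces the claimed bound exactly.

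I expect the only point that needs care is the cross term in the second estimate: one must expand the square, apply Cauchy--Schwarz, and recognize $(\sqrt{T_2}+\|B\|_F)^2$ as a clean perfect square so that the Frobenius norm bounds $\|A\|_F \le n$ and $\|B\|_F \le Cn$ recombine into the factor $(1+C)^\ell$ rather than something weaker. Everything else is bookkeeping, and the convention $0^0 = 1$ absorbs the boundary cases $\ell \in \{0, n\}$. Notably, no delicate optimization is required: the crude substitutions $T_1 \le n^2$ and $T_2 \le n^2$ (which cannot hold with equality simultaneously) are each made in the slack direction of its own factor, so the combined bound is still tight up to the constants displayed.
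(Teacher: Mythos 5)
Your proof is correct and follows essentially the same route as the paper: both invoke Proposition \ref{prop:sing_bound}, split the product at index $n-\ell$ using $\mathrm{rank}(B)\le\ell$, and bound each factor via AM-GM and the Frobenius norm constraints. The only difference is cosmetic — for the second product you apply AM-GM to the squared factors and then Minkowski (to get $(\sqrt{T_2}+\|B\|_F)^2$), whereas the paper applies AM-GM to the linear factors and then Cauchy--Schwarz to each sum; the two intermediate bounds coincide.
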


\begin{proof}
Let $0<\ell<n$, and $\sigma_1(A) \ge ... \ge \sigma_n(A)$ and $\sigma_1(B) \ge ... \ge \sigma_n(B)$ denote the singular values of $A$ and $B$. By Proposition \ref{prop:sing_bound},
\begin{align*}
     |\det(A+B)| &\le \bigg(\prod_{i=1}^{n-\ell} \sigma_i(A) \bigg) \prod_{j=1}^\ell\big( \sigma_j(B) + \sigma_{n-j+1}(A) \big) \\
     &\le \bigg( \frac{1}{n-\ell} \sum_{i=1}^{n-\ell} \sigma^2_i(A) \bigg)^{\frac{n-\ell}{2}} \bigg(\frac{1}{\ell} \sum_{j=1}^\ell \sigma_j(B) + \frac{1}{\ell} \sum_{j=1}^\ell\sigma_{n-j+1}(A)  \bigg)^\ell\\
     &\le \bigg( \frac{1}{n-\ell} \sum_{i=1}^{n-\ell} \sigma^2_i(A) \bigg)^{\frac{n-\ell}{2}} \bigg(\frac{1}{\ell^{\frac{1}{2}}} \bigg[\sum_{j=1}^\ell \sigma^2_j(B) \bigg]^{\frac{1}{2}} + \frac{1}{\ell^{\frac{1}{2}}}\bigg[ \sum_{j=1}^\ell\sigma_{n-j+1}^2(A) \bigg]^{\frac{1}{2}} \bigg)^\ell\\
     &\le \bigg( \frac{n^2}{n-\ell}\bigg)^{\frac{n-\ell}{2}} \bigg(\frac{Cn}{\ell^{\frac{1}{2}}}  + \frac{n}{\ell^{\frac{1}{2}}}\bigg)^\ell \\
     &= \frac{n^n}{(n-\ell)^{\frac{n-\ell}{2}}\ell^{\frac{\ell}{2}}}(1+C)^\ell,
\end{align*}
where we have used the AM-GM inequality in the second inequality and Cauchy-Schwarz in the third. The result for the cases $\ell=0$ and $\ell =n$ follows from gently modified versions of the same analysis. 
\end{proof}

We note that, when $\ell = 0$, Lemma \ref{lm:det_bound} implies the well-known corollary $|\det(A)| \le n^{n/2} |A|_\infty$ of Hadamard's inequality. A tighter version of Lemma \ref{lm:det_bound} can be obtained at the cost of brevity, by explicitly maximizing with respect to the parameter $x:= \sum_{j=1}^{\ell} \sigma^2_{n-j+1}(A)$ rather than upper bounding both $\sum_{i=1}^{n-\ell} \sigma^2_i(A)$ and $\sum_{j=1}^{\ell} \sigma^2_{n-j+1}(A)$ with $n^2$. However, this optimization does not lead to any improvement in the exponential constant of Theorem \ref{thm:main}, and so its derivation is left to the interested reader.

\subsection{An Improved Optimization Problem}\label{sub:improvedopt}
Lemma \ref{lm:det_bound} applied to the matrix iterates $A^{(k)} \in \mathbb{C}^{k \times k}$ of Gaussian elimination under complete pivoting leads to further constraints on the pivots $p_k = |A^{(k)}|_\infty$. Consider some $0< \ell <k$ with $k + \ell \le n$. Using block notation, let $N^{(1,1)}$, $N^{(1,2)}$, $N^{(2,1)}$, and $N^{(2,2)}$ denote the upper-left $\ell \times \ell$, upper-right $\ell \times k$, lower-left $k \times \ell$, and lower-right $k \times k$ sub-matrices of $A^{(k+\ell)}$. After $\ell$ further steps of Gaussian elimination applied to $A^{(k+\ell)}$, we obtain
\[ A^{(k+\ell)}=\begin{bmatrix} N^{(1,1)} & N^{(1,2)} \\ N^{(2,1)} & N^{(2,2)} \end{bmatrix}  = \begin{bmatrix} \widetilde{L} & 0  \\ N^{(2,1)}\widetilde{U}^{-1} & I  {}\end{bmatrix}\begin{bmatrix} \widetilde{U} & \widetilde{L}^{-1} N^{(1,2)}  \\  0 & N^{(2,2)} - N^{(2,1)} [N^{(1,1)}]^{-1} N^{(1,2)}  \end{bmatrix} ,\]
where $\widetilde{L}  \widetilde{U}$ is the LU factorization of $N^{(1,1)}$, implying that 
\[A^{(k)} =  N^{(2,2)} - N^{(2,1)} [N^{(1,1)}]^{-1} N^{(1,2)}.\]
For the sake of space, let $X := N^{(2,2)}$ and $Y:=N^{(2,1)} [N^{(1,1)}]^{-1} N^{(1,2)}$, and note that $Y$ has rank at most $\ell$. We may rewrite $A^{(k)}$ as
\begin{equation}\label{eqn:split}
A^{(k)} =  \bigg(X - \frac{\text{Re} \langle X,Y \rangle_F}{\|Y\|_F^2} Y \bigg) - \bigg( 1 - \frac{\text{Re} \langle X,Y \rangle_F}{\|Y\|_F^2} \bigg)Y.
\end{equation}
We note that
\[ \bigg\| X - \frac{\text{Re}\langle X, Y\rangle_F}{\|Y\|_F^2} Y \bigg\|_F^2 = \|X\|_F^2 - \frac{\big(\text{Re} \langle X,Y \rangle_F\big)^2}{\|Y\|_F^2} \le \|X\|_F^2 \le p_{k+\ell}^2 n^2 \]
and
\begin{align*}
    \bigg\| \bigg( 1 - \frac{\text{Re}\langle X, Y\rangle_F}{\|Y\|_F^2} \bigg) Y \bigg\|_F^2 &= \|Y\|_F^2 - 2 \,\text{Re} \langle X,Y \rangle_F + \frac{\big(\text{Re} \langle X,Y \rangle_F \big)^2}{\|Y\|_F^2} \\ &\le \|Y\|_F^2 - 2 \, \text{Re} \langle X,Y \rangle_F + \|X\|^2_F \\ &= \| X-Y \|_F^2 \le p_{k}^2 n^2,
\end{align*}
as the entries of $A^{(k)}$ and $N^{(2,2)}$ have modulus at most $p_k$ and $p_{k+\ell}$, respectively. Applying Lemma \ref{lm:det_bound} to $A^{(k)}$ using the splitting in Equation \ref{eqn:split}, we obtain the bound
\begin{equation}\label{ineq:nl}
\frac{\prod_{i=1}^k p_i}{p_{k+\ell}^k} = \frac{\det(A^{(k)})}{p_{k+\ell}^k} \le \frac{k^k}{(k-\ell)^{\frac{k-\ell}{2}}\ell^{\frac{\ell}{2}}} \bigg( 1 + \frac{p_k}{p_{k+\ell}}\bigg)^\ell.
\end{equation}
Making use of these additional constraints gives the following refinement of Optimization Problem \ref{prog:wilko}:

\begin{tcolorbox}
\begin{equation}\label{prog:improvedo}
\begin{gathered}
\textbf{Improved Optimization Problem} 
 \\[1ex]
\begin{array}{cll}
\textrm{max}  & p_1/p_n & \\
\textrm{s.t.} & \prod_{i = 1}^k p_i \le  k^{k/2}p_k^k   &\text{for } k = 1,...,n\\[.25ex]
& \prod_{i = 1}^k p_i \le\displaystyle{ \frac{k^kp_{k+\ell}^{k-\ell}(p_k + p_{k+\ell})^\ell }{(k-\ell)^{\frac{k-\ell}{2}}\ell^{\frac{\ell}{2}}}}&\text{for } \ell = 1,..., \min\{k-1,n-k\} \\[-1.5ex]
& & \quad \; \, k = 2,...,n-1.\\
\end{array}
\end{gathered}
\end{equation}
\end{tcolorbox}
\vspace{1 mm}

\subsection{From a Non-Linear to Linear Program}\label{sub:improvedlp}
The additional constraints given by Inequality \ref{ineq:nl} for $k = 2,...,n-1$ and $\ell = 1,...,\min\{k-1,n-k\}$ produce an optimization problem (Optimization Problem \ref{prog:improvedo}) that is no longer linear upon the transformation $q_k = \ln (p_k)$, $k = 1,...,n$. For this reason, we relax Optimization Problem \ref{prog:improvedo} in order to maintain linearity. For simplicity, we do so while giving only minor attention to lower-order terms (i.e., terms that do not affect the leading exponential constant). More complicated linear programs with improved behavior for finite $n$ can be obtained by a more involved analysis.

Consider an arbitrary feasible point $(p_1,...,p_n)$ of Optimization Problem \ref{prog:improvedo}. We claim that $(p_1,...,p_n)$ also satisfies
\begin{align}\label{ineq:r}
    \prod_{i=1}^k p_i \le (\tfrac{11}{4}k)^{k/2} p_{k+\ell}^{k-\ell} \; p_{k}^\ell \qquad \text{for} \quad &\ell = 1,...,\min\{k-1,n-k\} \\[-2.25ex] &k = 2,...,n-1. \nonumber
\end{align}
We break our analysis into two cases. If $p_k \le (\sqrt{11}/2)^{k/(k-\ell)} p_{k+\ell}$, then 
\[ \prod_{i=1}^k p_i \le k^{k/2} p_k^k  \le (\tfrac{11}{4} k)^{k/2} p_{k+\ell}^{k-\ell} \; p_k^{\ell}.\]
Conversely, if $p_k \ge (\sqrt{11}/2)^{k/(k-\ell)} p_{k+\ell}$, then
\begin{align*} 
\prod_{i=1}^k p_i  &\le \frac{k^k p_{k+\ell}^{k-\ell} \, ( p_{k} + p_{k+\ell} )^\ell}{(k-\ell)^{\frac{k-\ell}{2}}\ell^{\frac{\ell}{2}}}   \le k^{k/2} p_{k+\ell}^{k-\ell} \; p_{k}^\ell \bigg( \frac{k^{k/2} \big(1 + \big(\frac{2}{\sqrt{11}}\big)^{k/(k-\ell) }\big)^\ell}{(k-\ell)^{\frac{k-\ell}{2}}\ell^{\frac{\ell}{2}}}   \bigg) \le (\tfrac{11}{4}k)^{k/2} p_{k+\ell}^{k-\ell} p_{k}^\ell,
\end{align*}
where we have used the fact that
\[ \max_{t \in (0,1)} \bigg(\frac{1}{t} \bigg)^{\frac{t}{2}}\bigg(\frac{1}{1-t}\bigg)^{\frac{1-t}{2}} = \sqrt{2} \quad \text{and} \quad \max_{t \in (0,1)} \bigg(1 + \bigg(\frac{2}{\sqrt{11}}\bigg)^{1/(1-t) }\bigg)^t \approx 1.168 < \sqrt{\frac{11}{8}}.\]
Applying the transformation $q_k = \ln (p_k)$, $k = 1,...,n$, to Inequality \ref{ineq:r}, we obtain the linear program:

\begin{tcolorbox}
\begin{equation}\label{prog:lin}
\begin{gathered}
\textbf{Improved Linear Program}  \\[1ex]
\begin{array}{cll}
\textrm{max}& q_1 - q_n & \\
\textrm{s.t.}  &  \sum_{i = 1}^k q_i \le \frac{k}{2} \ln(k) + k q_k &\text{for } k = 1,...,n  \\
& \sum_{i = 1}^k q_i \le \frac{k}{2} \ln(\tfrac{11}{4}k) + (k-\ell) q_{k+\ell} + \ell q_k &\text{for } \ell = 1,..., \min\{k-1,n-k\} \\
& & \quad \; \, k = 2,...,n-1.\\
\end{array}
\end{gathered}
\end{equation}
\end{tcolorbox}
\vspace{1mm}

\noindent and note that the maximum growth factor $g_n(\mathbb{C})$ is upper bounded by $e^{\mathrm{OPT}}$, where $\mathrm{OPT}$ is the optimal value of this linear program. Program \ref{prog:lin} is an improved version of Wilkinson's linear program (Program \ref{prog:wilkl}), containing all of Wilkinson's constraints as well as additional bounds representing long-range interactions (i.e., bounds relating $A^{(k)}$ and $A^{(k+\ell)}$). In addition, we note that the optimal value of Program \ref{prog:lin} and the logarithm of the optimal value of Program \ref{prog:improvedo} are asymptotically equal up to lower order terms:
\begin{proposition}
If OPT is the optimal value of Linear Program \ref{prog:lin} for $n$, then the optimal value of Optimization Problem \ref{prog:improvedo} for $n$ lies in the interval $[n^{-3/2} e^{OPT},e^{OPT}]$.
\end{proposition}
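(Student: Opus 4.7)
The claim splits into two inequalities, and the upper bound $e^{\mathrm{OPT}}$ is essentially free: any feasible $(p_1,\ldots,p_n)$ of Optimization Problem \ref{prog:improvedo} yields $(q_k):=(\ln p_k)$ feasible for Linear Program \ref{prog:lin}, since constraint 1 is the logarithmic form of 1', and the case analysis leading to Inequality \ref{ineq:r} shows precisely that 2' implies 2. Hence $q_1-q_n\le\mathrm{OPT}$, giving $p_1/p_n\le e^{\mathrm{OPT}}$.

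For the lower bound $n^{-3/2}e^{\mathrm{OPT}}$, the plan is to take an LP optimizer $(q_1^*,\ldots,q_n^*)$ with $q_1^*-q_n^*=\mathrm{OPT}$ and perturb it downward by a carefully chosen weight. Set $p_k := e^{q_k^*-r_k}$ with $r_k:=\tfrac{3}{2}\ln(n/k)$, so that $r$ is non-increasing in $k$ and $r_1-r_n=\tfrac{3}{2}\ln n$, yielding $p_1/p_n = n^{-3/2}e^{\mathrm{OPT}}$. Verifying that this $(p_k)$ is feasible for Optimization Problem \ref{prog:improvedo} then suffices. Constraint 1' reduces via LP constraint 1 to $\sum_{i=1}^k r_i \ge k r_k$, which is immediate from the monotonicity of $r$ (equivalently, from $k!\le k^k$).

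For constraint 2', I would first weaken the right hand side using $(p_k+p_{k+\ell})^\ell\ge p_k^\ell$, reducing to the cleaner sufficient inequality $\prod_{i=1}^k p_i \le k^k p_{k+\ell}^{k-\ell} p_k^\ell / [(k-\ell)^{(k-\ell)/2}\ell^{\ell/2}]$. Taking logarithms and invoking LP constraint 2, this becomes the purely numerical requirement
\[\sum_{i=1}^k r_i - (k-\ell)r_{k+\ell} - \ell r_k \ge \frac{k}{2}\ln\frac{11}{4} + \frac{k-\ell}{2}\ln\frac{k-\ell}{k} + \frac{\ell}{2}\ln\frac{\ell}{k}.\]
Substituting $r_k=\tfrac{3}{2}\ln(n/k)$, the $\ln n$ terms on the left cancel (since $k-(k-\ell)-\ell=0$), leaving $\tfrac{3}{2}[(k-\ell)\ln(k+\ell)+\ell\ln k-\ln k!]$. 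Using Stirling's lower bound $\ln k!\ge k\ln k - k$, this is at least $\tfrac{3}{2}[k+(k-\ell)\ln(1+\ell/k)]\ge\tfrac{3k}{2}$, while the right-hand side is at most $\tfrac{k}{2}\ln(11/4)\approx 0.506\,k$, so the inequality holds with ample room.

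The one design choice is the perturbation $r_k$. It must decrease in $k$ to preserve constraint 1', while producing $\Theta(k)$ slack on constraint 2' to absorb the asymptotic gap $(11/8)^{k/2}$ between LP constraint 2 and the tighter OP constraint 2'. The family $r_k=c\ln(n/k)$ is the natural one-parameter choice, and any $c\ge\tfrac{1}{2}\ln(11/4)$ suffices; $c=\tfrac{3}{2}$ gives the clean $n^{-3/2}$ factor in the statement. I do not anticipate any genuine obstacle beyond tracking the Stirling bound down to $k=1$ (where it still holds).
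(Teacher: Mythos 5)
Your construction is, up to an irrelevant scalar, exactly the paper's: the paper sets $p_k = k^{3/2}e^{q_k}$, while you set $p_k = e^{q_k^* - r_k}$ with $r_k = \tfrac{3}{2}\ln(n/k)$, i.e.\ $p_k = n^{-3/2}k^{3/2}e^{q_k^*}$. Since both families of constraints in Optimization Problem \ref{prog:improvedo} are invariant under rescaling all $p_k$ by a common constant, the two choices are interchangeable, and your verification of constraint 1' and your reduction of constraint 2' to the numerical inequality
\[
\tfrac{3}{2}\bigl[(k-\ell)\ln(k+\ell) + \ell\ln k - \ln k!\bigr] \ \ge\ \tfrac{k}{2}\ln\tfrac{11}{4} + \tfrac{k-\ell}{2}\ln\tfrac{k-\ell}{k} + \tfrac{\ell}{2}\ln\tfrac{\ell}{k}
\]
are both correct.

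The gap is in how you finish. You invoke ``Stirling's lower bound $\ln k! \ge k\ln k - k$'' to assert the left side is at least $\tfrac{3}{2}\bigl[k + (k-\ell)\ln(1+\ell/k)\bigr]$, but that direction is backwards: a lower bound on $\ln k!$ gives an \emph{upper} bound on an expression containing $-\ln k!$. In fact your claimed intermediate bound is false: it is equivalent to $k\ln k - \ln k! \ge k$, i.e.\ $k^k/k! \ge e^k$, which contradicts Stirling ($k^k/k! \sim e^k/\sqrt{2\pi k}$). The correct route, and the one the paper effectively takes, is to throw away the gain from $\ell$ first: since $\ln(k+\ell) \ge \ln k$, the left side is at least $\tfrac{3}{2}\bigl[k\ln k - \ln k!\bigr] = \tfrac{3}{2}\ln(k^k/k!)$, and the right side is at most $\tfrac{k}{2}\ln\tfrac{11}{4}$ because the two entropy terms are nonpositive. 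The needed inequality is then $k^k/k! \ge (11/4)^{k/3}$ for $k \ge 2$, which holds at $k=2$ ($2 \ge 2.75^{2/3} \approx 1.963$) and is preserved under $k \mapsto k+1$ because the ratio $(1+1/k)^k \ge 2 > (11/4)^{1/3} \approx 1.40$. (Note this does \emph{not} reduce to $k! \le k^k$ alone; you genuinely need the factor $e$-ish growth in $k^k/k!$ to absorb $(11/4)^{k/3}$.) With this repair, the rest of your argument goes through and agrees with the paper.
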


\begin{figure}
\subfigure[Comparing Wilkinson's Bound to our Improved Bound for $g_n(\mathbb{C})$ for $n \le 5000$]{\includegraphics[width = 7.2cm]{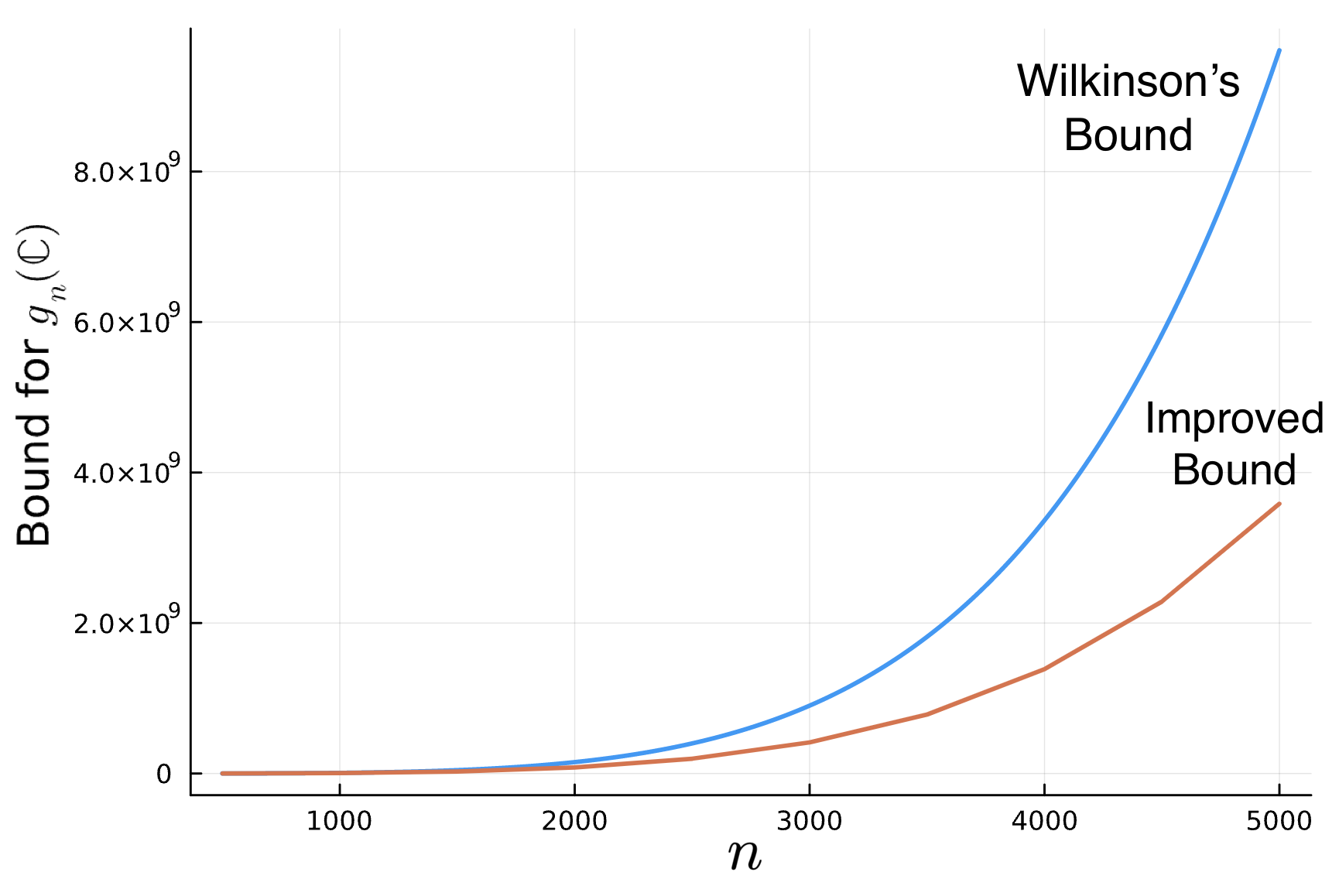}}\qquad
\subfigure[Active constraints parameterized by $(k,\ell)$ for the optimal solution to Program \ref{prog:lin} at $n = 5000$]{\includegraphics[width = 7.2cm]{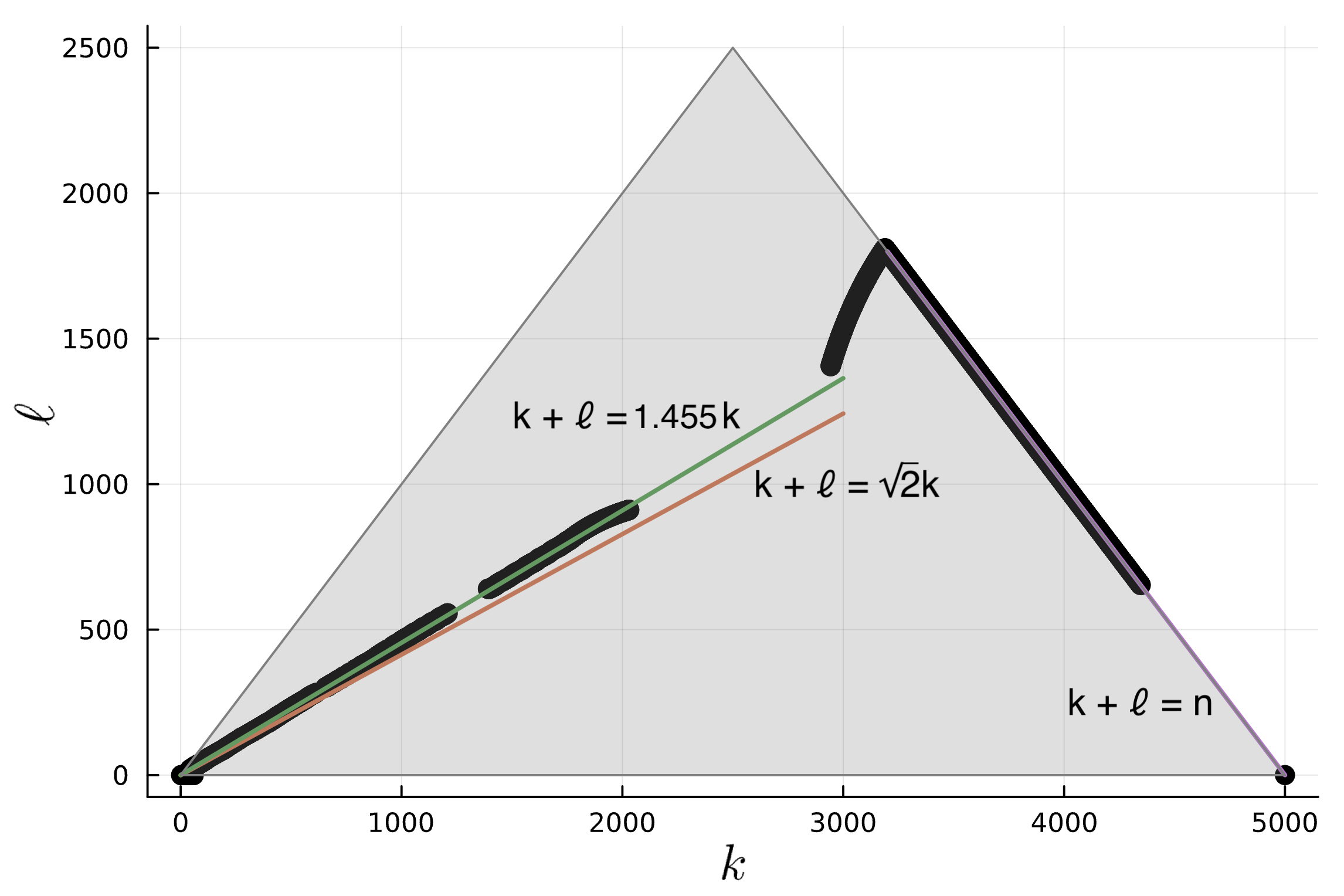}}
\caption{Comparing our Improved Linear Program to Wilkinson's LP: Figure (a) illustrates the difference between Wilkinson's bound for $g_n(\mathbb{C})$ (Inequality \ref{eqn:wilk}) and the upper bound produced by the optimal value of Program \ref{prog:lin} for $n \le 5000$.  Figure (b) is a scatter plot of the pairs $(k,\ell)$ for which the corresponding inequality in Program \ref{prog:lin} is tight for a numerically computed optimal solution at $n = 5000$. The grey shaded triangle shows the set of $(k,\ell)$ corresponding to constraints of Program \ref{prog:lin}, with Wilkinson's constraints parameterized by $(k,0)$, and the black dots represent the subset of those constraints that are active for the numerically computed optimal solution. For $n =5000$, almost none of Wilkinson's constraints are active. The red line $ k + \ell = \sqrt{2}  k$ is the set of constraints used to prove Theorem \ref{thm:main}, and the green line denotes the asymptotically tight constraints for the feasible point produced in Subsection \ref{sub:tight}. While the points on the purple line $k + \ell = n$ improves the objective value, these constraints do not play a role in the asymptotic leading term of the solution to the linear program.
}
\label{fig:lin}
\end{figure}

\begin{proof}
Let $(q_1,...,q_n)$ be a feasible point of Linear Program \ref{prog:lin}. It suffices to show that $p_k = k^{3/2} e^{q_k}$, $k = 1,...,n$, is a feasible point of Optimization Problem \ref{prog:improvedo}. Considering an arbitrary constraint parameterized by $k > 1$ and $\ell$, we have
\[\prod_{i=1}^k p_i = (k!)^{\tfrac{3}{2}} \exp\bigg\{\sum_{i=1}^k q_i\bigg\} \le (k!)^{\tfrac{3}{2} }\exp\bigg\{\frac{k}{2} \ln(\tfrac{11}{4}k) + (k-\ell) q_{k+\ell} + \ell q_k\bigg\} .\]
Rewriting the right-hand side in terms of $p_k$ gives
\begin{align*}
\prod_{i=1}^k p_i  &\le \frac{(k!)^{\tfrac{3}{2}}(\tfrac{11}{4}k)^{\tfrac{k}{2}} p_{k+\ell}^{k-\ell} \,p_k^\ell }{(k+\ell)^{\tfrac{3}{2}(k-\ell)} k^{\tfrac{3}{2}\ell}} \le k^{\tfrac{k}{2}} p_{k+\ell}^{k-\ell} \,p_k^\ell \frac{(k!)^{\tfrac{3}{2}}(\tfrac{11}{4})^{\tfrac{k}{2}} }{k^{\tfrac{3}{2}k}} \le k^{\tfrac{k}{2}} p_{k+\ell}^{k-\ell} \,p_k^\ell \le \frac{k^kp_{k+\ell}^{k-\ell}(p_k + p_{k+\ell})^\ell }{(k-\ell)^{\frac{k-\ell}{2}}\ell^{\frac{\ell}{2}}},
\end{align*}
completing the proof.
\end{proof}

In the following section, we provide nearly matching upper and lower bounds on the optimal value of Program \ref{prog:lin} for sufficiently large $n$, thereby proving Theorem \ref{thm:main}.

\subsection{Bounding the Growth Factor in Practice}\label{sub:computation} While the proof of Theorem \ref{thm:main} focuses on the behavior for large $n$, we note that an improvement in exponential constant exists in practice for reasonably sized matrices as well. We provide a comparison of the optimal value of Program \ref{prog:lin} to the optimal value of Wilkinson's LP in Figure \ref{fig:lin} for $n \le 5000$. The numerically computed solutions to Program \ref{prog:lin} were obtained using the Gurobi Optimizer \cite{gurobi} called through the JuMP package for mathematical optimization \cite{lubin2023} in the Julia programming language \cite{bezanson2017julia}. We stress that numerically computed solutions to a linear program can be converted into mathematical bounds via a dual feasible point verified in exact arithmetic. In addition, Program \ref{prog:lin} can be adapted in a number of ways for computational efficiency. For instance, the linear transformation $Q(k)= \sum_{i=1}^k q_i$ produces a linear program with a simple objective and sparse constraints (at most four variables in each). Furthermore, as the analysis in Section \ref{sec:lpbound} suggests, only a linear number of constraints are required to produce a reasonable upper bound for the optimal value. One natural choice consists of Wilkinson's original constraints and additional constraints of the form $k+ \ell = n$ and $k+ \ell \in[\sqrt{2}k-1, \sqrt{2}k + C]$ for some constant $C$ (Theorem \ref{thm:main} is proved using only constraints of the form $k+ \ell = \lceil \sqrt{2}k \rceil$). Finally, we stress that the techniques used to produce improved estimates can be further optimized to obtain even better bounds in both theory and practice. We hope that the interested reader will do so.

\section{Bounding the Optimal Value of our Linear Program}\label{sec:lpbound}

Finally, we prove that the objective of Program \ref{prog:lin} satisfies the bound
\[\max \; q_1-q_n \, \leq \,  \alpha \ln^2 n +(\beta + 1/2)\ln n, \qquad \text{where} \quad \alpha = \frac{1}{2(2+(2-\sqrt{2}) \ln 2)}\]
and $\beta = 0.41$,
thus completing the proof of Theorem \ref{thm:main} ($\beta = 0.41$ corresponds to the constant $\beta + 1/2 = 0.91$ in Theorem \ref{thm:main}). We do so via a duality argument, making use of the constraints for $k$ and $\ell$ satisfying $k + \ell \approx \sqrt{2} k$. Before proving the above bound, we first illustrate why $[2(2+(2-\sqrt{2}) \ln 2)]^{-1}$ is the correct choice of $\alpha$ for constraints of the form $k + \ell \approx \sqrt{2} k$, and show that this choice is within $0.00024$ of the exact asymptotic constant of Program \ref{prog:lin}.

\subsection{On the Choice and Optimality of the Constant $\alpha=[2(2+(2-\sqrt{2}) \ln 2)]^{-1}$}\label{sub:tight}
Suppose that $q_x-q_1 = - \gamma \ln^2 x+O(1)$. Then, for the constraint 
\[ \sum_{i=1}^k(q_i - q_1) \le \frac{k}{2}\ln (\tfrac{11}{4} k) + (k-\ell)(q_{k+\ell} -q_1) + \ell (q_k - q_1),\]
the left-hand side equals
\[\int_1^k -\gamma \ln^2 x \; \mathrm{d} x+O(k)=-\gamma k \ln^2 k + 2\gamma k \ln k + O(k) \]
and the right-hand side equals 
\[- \gamma k \ln^2 k + \big[k/2 - 2 \gamma (k-\ell) \ln(1+ \ell/k)\big] \ln k + O(k).\]
Letting $t = \ell/k$, the right-hand side is asymptotically larger than the left-hand side if
\[ \gamma \leq \frac{1}{4(1+(1-t)\ln(1+t))}.\]
The values $t = 0$ and $t=1$ (i.e., when $\ell = 0$ or $\ell = k$) correspond to the constraints of Wilkinson's linear program, and for $t = 0$ and $t =1$, we obtain $\gamma \le 1/4$ (e.g., Wilkinson's bound). The value $t = \sqrt{2} -1$ produces the upper bound $1/[2(2+(2-\sqrt{2}) \ln 2)]\approx 0.20781$ of Theorem \ref{thm:main}. The quantity $[4(1+(1-t)\log(1+t))]^{-1}$ on the interval $[0,1]$ is minimized by $t = \exp\{W(2e) - 1\} -1 \approx 0.4547$, where $W(x)$ is the Lambert W function, with a minimum value of 
\[  \frac{1}{4\big(1+ (2-e^{W(2e)-1})(W(2e)-1)\big)} \approx 0.207576.\]
This implies the existence of a solution to Program \ref{prog:lin} with $q_1-q_n = 0.207575 \ln^2 n - O(\ln n)$, thus illustrating that our upper bound of $\alpha=[2(2+(2-\sqrt{2}) \ln 2)]^{-1}\approx 0.207811$ is within $0.00024$ of the optimal value of the linear program. We do not pursue further improvement on this constant.

\subsection{Reducing Theorem \ref{thm:main} to Geometric Mean Growth}
For ease of analysis, we consider a continuous version of our variables $q = (q_1,...,q_n)$. Let 
$$ f(x) = q_{\lceil x \rceil}-q_1 \qquad \text{and} \qquad F(x)= \frac{1}{x} \int_0^x f(t) \, \mathrm{d}t \qquad \text{for } x>0,$$
where $\{q_k\}_{k=1}^\infty$ is any sequence such that $q_1\ge q_k$ for all $k \in \mathbb{N}$ and $(q_1,...,q_n)$ is a feasible point of Program \ref{prog:lin} for all $n \in \mathbb{N}$. $F(x)$ can be thought of a continuous version of geometric mean growth (described in Section \ref{sec:wilk}). Any optimal solution $(q_1,...,q_n)$ for the $n$-dimensional linear program can be converted into such a sequence by simply setting $q_{k} = q_n$ for all $k >n$. The constraint of Program \ref{prog:lin} with $k=\lceil x \rceil$ and $\ell=\lceil\sqrt{2} x \rceil-\lceil x \rceil$ implies that for all $x>0$,
\begin{align}F(\lceil x \rceil) &\leq \frac{\ln (\tfrac{11}{4}\lceil x \rceil)}{2} + \left( \frac{2 \lceil x \rceil - \lceil \sqrt{2} x \rceil}{\lceil x \rceil} \right)f(\sqrt{2}x)+\left( \frac{\lceil \sqrt{2} x \rceil-\lceil x \rceil}{\lceil x \rceil} \right)f(x) \nonumber \\
 &\leq \frac{\ln (\tfrac{11}{4} x)}{2} + \frac{1}{2x} +\left( \sqrt{2} - 1 - \frac{\sqrt{2}}{x} \right)\left( \sqrt{2}f(\sqrt{2}x) + f(x) \right).\label{ineq:f}
\end{align}
We make the following claim regarding $F(x)$ (recall, $\alpha=[2(2+(2-\sqrt{2}) \ln 2)]^{-1}$ and $\beta = 0.41$).
\begin{lemma}\label{claim}
$F(x) > -\alpha \ln^2 x - \beta \ln x$ for all $x > 100$.
\end{lemma}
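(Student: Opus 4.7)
My plan is to prove Lemma \ref{claim} via an LP duality argument on the linear program $\max\,-F(\lceil x\rceil)$ over sequences $(q_k)$ satisfying the constraints of Program \ref{prog:lin}. For each fixed $x > 100$, I will exhibit a nonnegative dual feasible solution whose value is at most $\alpha \ln^2 x + \beta \ln x$; by weak LP duality this upper-bounds $-F(\lceil x\rceil)$. The dual weights will concentrate on the $\sqrt{2}$-family of improved constraints (with $k + \ell = \lceil \sqrt{2}k\rceil$), supplemented by a small weight on a few Wilkinson constraints at the boundary.

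Writing $r_i := q_1 - q_i \ge 0$, the $\sqrt{2}$-family constraint at scale $k$ reads
\[(2-\sqrt{2})k\,r_{\lceil \sqrt{2}k\rceil} + (\sqrt{2}-1)k\,r_k - \sum_{i=1}^k r_i \le \tfrac{k}{2}\ln(\tfrac{11k}{4}).\]
Summing with dual weights $\lambda_k \ge 0$ (and $\mu_k\ge 0$ for Wilkinson's constraints), dual feasibility demands that the coefficient of $r_j$ in the combined LHS is at least $(1/\lceil x\rceil)\mathbf{1}_{j\le\lceil x\rceil}$. For the $\sqrt{2}$-family, the coefficient of $r_j$ receives two positive contributions --- one from the constraint at $k = j$, of weight $(\sqrt{2}-1)j\,\lambda_j$, and one from the constraint at the unique $k(j)\approx j/\sqrt{2}$ with $\lceil \sqrt{2}k(j)\rceil = j$, of weight $(2-\sqrt{2})k(j)\,\lambda_{k(j)} = (\sqrt{2}-1)j\,\lambda_{k(j)}$ --- and one negative cumulative contribution $-\sum_{k\ge j}\lambda_k$. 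The scale-invariant choice $\lambda_k \propto 1/k^2$ on an interval $[1, K]$ with $K$ proportional to $x$ produces the desired bulk cancellation, and the identity $2\alpha + (2-\sqrt{2})\alpha \ln 2 = \tfrac{1}{2}$ --- which is precisely the algebraic condition pinning down $\alpha = 1/[2(2+(2-\sqrt{2})\ln 2)]$ --- makes the dual objective $\sum_k \lambda_k\,\tfrac{k}{2}\ln(\tfrac{11k}{4})$ equal $\alpha \ln^2 x$ at leading order.

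The short boundary regions --- $j$ near $1$ (where $k(j) < 1$ lies outside the support of $\lambda$, so only one positive contribution survives) and $j$ near $\lceil x\rceil$ (where the cumulative tail $\sum_{k\ge j}\lambda_k$ is smallest) --- are patched by supplementary Wilkinson weights $\mu_k$ supported on short scales, at $O(1)$ cost to the dual value. The residual lower-order items --- the $\tfrac{1}{2x}$ and $-\tfrac{\sqrt{2}}{x}$ correction terms in Inequality \ref{ineq:f}, the constant $\tfrac{\ln(11/4)}{2}$, and the gap between $\lceil x\rceil$ and $x$ --- each contribute at most $O(\ln x)$ to the bound, and are absorbed into the slack $\beta \ln x$ with $\beta = 0.41$. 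The threshold $x > 100$ is the regime in which the $O(1/x)$ and $O(1)$ errors are small enough for the bookkeeping to close. The main obstacle will be the precise calibration of the supplementary Wilkinson weight $\mu_k$ and the endpoint $K$ of the $\lambda_k$ support: the bulk cancellation is tight, so the boundary feasibility condition $[A^\top y]_j \ge 1/\lceil x\rceil$ leaves little slack, and the final value $\beta = 0.41$ is determined precisely by balancing the feasibility requirement against the cost of the supplementary dual weights.
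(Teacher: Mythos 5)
The paper's proof is \emph{not} an explicit dual-weight construction: it establishes a base case for $x \in (100, 1700]$ using Wilkinson's bound and the geometric-mean bound (Inequality \ref{ineq:avg_bound}), and then argues by ``induction on $x$'': assuming a minimal counterexample $y>1700$, it integrates the constraint \ref{ineq:f} over $x\in[y/2,y/\sqrt{2}]$ (so that the $f(\sqrt{2}x)$ and $f(x)$ terms merge, by a change of variables, into $\int_{y/2}^y f$), rewrites the result in terms of $2F(y)-F(y/2)$, and derives a contradiction. You have proposed instead to exhibit, for each fixed $x$, an explicit dual-feasible vector concentrated on the $\sqrt 2$-family with weights $\lambda_k\propto 1/k^2$. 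This is a genuinely different route, and unfortunately it does not deliver the constant $\alpha$.

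The central difficulty is that the scale-invariant ansatz $\lambda_k\propto C/k^2$ on $[1,K]$ gives the Wilkinson constant $1/4$, not $\alpha$. Passing to the continuous approximation, the coefficient density at $u$ of the combined constraint is
\[
\frac{(\sqrt2-1)C}{u}+\frac{(2-\sqrt2)C}{u}-\left(\frac{C}{u}-\frac{C}{K}\right)=\frac{C}{K},
\]
i.e.\ the $1/u$ terms cancel \emph{exactly}. This is the ``bulk cancellation'' you describe, but its consequence is that the entire dual-feasibility requirement rests on the boundary term $C/K$: to have $C/K\ge 1/x$ with $K\asymp x$ you are forced to take $C\ge 1$, and the dual objective $\frac{C}{2}\int_1^K \ln(\tfrac{11}{4}s)\,\frac{ds}{s}\approx \tfrac{C}{4}\ln^2 x$ then equals $\tfrac14\ln^2 x$, no better than Wilkinson. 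The identity $2\alpha + (2-\sqrt2)\alpha\ln 2 = \tfrac12$ that you invoke is a \emph{primal} statement: it is precisely the condition, derived in Subsection \ref{sub:tight}, under which the point $r_k=\alpha\ln^2 k$ asymptotically saturates the $\sqrt2$-constraint, and it does not assert anything about the value of your dual combination. The dual that certifies $\alpha\ln^2 x$ must be genuinely non-scale-invariant in $s$ (one can check via complementary slackness that near $s\approx x/\sqrt 2$ it behaves like $1/(sx)$, not $1/s^2$, and for smaller $s$ it is governed by a delay-type equation); the paper's integrate-and-induct device implicitly builds such a certificate without writing it down. There is also a secondary, more technical problem you gloss over: $k\mapsto\lceil\sqrt2 k\rceil$ is not surjective, so roughly a $1-1/\sqrt2\approx 29\%$ density of indices $j$ receive only \emph{one} positive boost, giving $[A^\top y]_j\approx (\sqrt2-2)C/j<0$ for those $j$ throughout the bulk; patching this by Wilkinson weights costs $\Theta(\ln^2 x)$, not $O(\ln x)$, and thus cannot be absorbed into $\beta\ln x$. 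Finally, your proposal does not address the range $x\in(100,1700]$, which in the paper requires a separate base-case argument because the claimed bound there lies below Wilkinson's bound for a single $x$ and must be extracted from the geometric-mean estimate.
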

Lemma \ref{claim} implies our desired result, as
$$F(n) = \frac{1}{n}  \sum_{i=1}^n (q_i-q_1) \le \frac{1}{n}\bigg( \frac{n}{2} \ln n + n q_n - n q_1\bigg),$$
and $\alpha \ln^2 n + (\beta + 1/2) \ln n$ is larger than Wilkinson's bound for $x \le 100$. A tighter bound may be obtained by adding together constraints of the form $k + \ell = n$ for $k \ge n/(8\alpha)$ (e.g., the constraints appearing in Figure \ref{fig:lin}(b)). However, the analysis is involved and the improvement on the $1/2 \, \ln n$ term produced by the argument above is minor ($\approx 0.046$ improvement, at the cost of lower-order terms).

\subsection{Proof of Lemma \ref{claim}: Base Case}
The proof of Lemma \ref{claim} is, in spirit, by ``induction on $x$" via a duality argument. Clearly the assertion holds for $x \in (100,1700]$ for $\beta$ sufficiently large. However, verifying the base case of $x \in (100,1700]$ for $\beta = 0.41$ requires some analysis, as the quantity $\alpha \ln^2 n + \beta \ln n$ is strictly less than Wilkinson's bound. We have
\[ F(x) = \frac{1}{x} \int_0^x q_{\lceil t \rceil} - q_1 \,  \mathrm{d}t = \frac{x - \lfloor x \rfloor}{x} (q_{\lceil x \rceil} -q_1) + \frac{1}{x} \sum_{k =1}^{\lfloor x \rfloor} (q_k - q_1)  .\]
By Inequalities \ref{eqn:wilk} and \ref{ineq:avg_bound}, 
\[q_1 - q_{\lceil x \rceil} \le \frac{\ln^2 \lceil x \rceil}{4} + \frac{\ln \lceil x \rceil}{2} + \ln 2 \qquad \text{ and } \qquad \frac{1}{\lfloor x \rfloor} \sum_{k = 1}^{\lfloor x \rfloor} (q_1 - q_k) \le \frac{\ln^2 \lfloor x \rfloor}{4} + \ln 2.\] 
Altogether, we obtain the lower bound
\begin{align*}
    F(x) &\ge - \frac{1}{x}\bigg(\frac{\ln^2 \lceil x \rceil}{4} + \frac{\ln \lceil x \rceil}{2} + \ln 2\bigg) - \bigg(\frac{\ln^2 \lfloor x \rfloor}{4} + \ln 2\bigg)\\
    &\ge - \frac{1}{x}\bigg(\frac{(\ln x  + \tfrac{1}{x})^2}{4} + \frac{\ln x  + \tfrac{1}{x}}{2} + \ln 2\bigg) - \bigg(\frac{\ln^2  x }{4} + \ln 2\bigg).
\end{align*} 
By inspection, the right-hand side of the above inequality is strictly greater than $-(\alpha \ln^2 x + \beta \ln x)$ for our interval of interest $x \in [100,1700]$.

\subsection{Proof of Lemma \ref{claim}: Inductive Step}
In order to verify the claim for some $y>1700$, we integrate over $x \in \big[\tfrac{y}{2},\tfrac{y}{\sqrt{2}}\big]$ to obtain a lower bound for $F(y)$ in terms of $F(x)$ for $x <y$. In particular, by integrating Inequality \ref{ineq:f} over $x \in \big[\tfrac{y}{2},\tfrac{y}{\sqrt{2}}\big]$ we have
\begin{align*}
\frac{1}{\frac{y}{\sqrt{2}} - \frac{y}{2}} \int_{\frac{y}{2}}^{\frac{y}{\sqrt{2}}} F(\lceil x \rceil) \, \mathrm{d}x   &\leq  \frac{1}{\frac{y}{\sqrt{2}} - \frac{y}{2}} \left[ \bigg(\sqrt{2} - 1 - \frac{2\sqrt{2}}{y}\bigg) \int_{\frac{y}{2}}^{y} f(x) \, \mathrm{d}x + \int_{\frac{y}{2}}^{\frac{y}{\sqrt{2}}} \frac{\ln(\tfrac{11}{4}x)}{2} + \frac{1}{2x} \, \mathrm{d}x   \right]\\
 &= \bigg(1- \frac{4+2\sqrt{2}}{y}\bigg) \big( 2 F(y) - F(\tfrac{y}{2}) \big) + \frac{\ln y}{2}+ \frac{\ln 2 + \sqrt{2} \ln\tfrac{11}{4} - \sqrt{2} }{2\sqrt{2}} + \frac{(\sqrt{2}+1)\ln 2}{2y}.   
\end{align*}
Rearranging the above inequality allows us to lower bound $F(y)$ by a positive linear combination of $F(x)$ for $x \in \big[\tfrac{y}{2},\tfrac{y}{\sqrt{2}}\big]$. We note that this is the reason for the choice of $k + \ell \approx \sqrt{2} k$, as this approach does not give us such a bound if $\sqrt{2}$ is replaced by a larger constant. Now, suppose our claim is false, and let $y > 1700$ be the smallest value such that $F(y) \le -\alpha \ln^2 y - \beta \ln y$. We aim to show that this contradicts the above lower bound for $F(y)$. By assumption,
\begin{align*}
    F(\lceil x \rceil) &>  -\alpha \ln^2(x+1) -\beta \ln (x+1)  \\
    &>-\alpha \ln^2 x -\beta \ln x   - \frac{2 \alpha \ln x}{x} - \frac{\beta}{x} - \frac{\alpha}{x^2} \qquad \text{for } x \in \big[\tfrac{y}{2} , \tfrac{y}{\sqrt{2}}\big],
\end{align*} 
implying that
\begin{align*}
\frac{1}{\frac{y}{\sqrt{2}} - \frac{y}{2}} \int_{\frac{y}{2}}^{\frac{y}{\sqrt{2}}} F(\lceil x \rceil) \, \mathrm{d}x   &>  - \alpha \ln^2 y - \big((\sqrt{2}\ln 2 -2)\alpha +\beta \big) \ln y -\bigg(2  - \frac{(3+\sqrt{2}) \ln^2 2}{2 \sqrt{2}} -\sqrt{2}\ln 2\bigg)\alpha \\ &\qquad - \bigg(\frac{\ln 2}{\sqrt{2}} -1 \bigg) \beta   - \frac{2 (\sqrt{2}+1) \alpha \ln 2  \ln y}{y}   - \frac{(\sqrt{2}+1)(  \beta \ln 2  - \tfrac{3}{2}\alpha \ln^2 2  )}{y} - \frac{2 \sqrt{2} \alpha}{y^2}.
\end{align*}
In addition,
\[ 2F(y) - F(\tfrac{y}{2})  <  - \alpha \ln^2 y - (2 \alpha \ln 2 + \beta) \ln y + \alpha \ln^2 2 - \beta \ln 2. \]
Combining our upper and lower bounds, we observe that the terms containing $\ln^2 y$ are equal, and the terms containing $\ln y$ are equal
$$ - \big((\sqrt{2}\ln 2 -2)\alpha +\beta \big) = \frac{1}{2} - (2 \alpha \ln 2 + \beta)$$
due to the value of $\alpha$. We are left with the inequality
\[ \frac{(\sqrt{2}-1)\ln 2 + \sqrt{2}}{\sqrt{2}} \beta + \frac{(2-\sqrt{2})\ln^2 2 - 4(2-\sqrt{2}) ( \ln \frac{11}{4} - 1)\ln 2  - 8 \ln \frac{11}{4} }{8(2+(2-\sqrt{2}) \ln 2)}  + g(\beta,y)<0,\]
where $g(\beta,y)$ is a linear function of $\beta$ of order $O(\ln^2 (y)/y)$. The left-hand side is strictly greater than zero for a sufficiently large choice of $\beta$. However, verifying that our choice of $\beta = 0.41$ is sufficient requires an explicit analysis of $g(\beta,y)$ for $\beta = 0.41$ and $y >1700$. The function $g(\beta,y)$ is given by
\begin{align*}
g(\beta,y) &= - \frac{2+\sqrt{2}}{2+(2-\sqrt{2})\ln 2} \, \frac{\ln^2 y}{y} - \bigg( (4+2\sqrt{2}) \beta + \frac{(5 + 3\sqrt{2}) \ln 2}{2+(2-\sqrt{2})\ln 2}\bigg) \, \frac{\ln y}{y}\\
&\qquad + \bigg( \frac{(11+7 \sqrt{2})\ln^2 2 }{4(2+(2-\sqrt{2})\ln 2)} -  (5 + 3\sqrt{2}) \beta \ln 2 -\frac{(\sqrt{2}+1)\ln 2}{2}  \bigg) \,\frac{1}{y}  - \frac{\sqrt{2}}{2+(2-\sqrt{2})\ln 2} \, \frac{1}{y^2}.
\end{align*}
When $\beta = 0.41$ and $y >1700$,
\[\frac{(\sqrt{2}-1)\ln 2 + \sqrt{2}}{\sqrt{2}} \beta + \frac{(2-\sqrt{2})\ln^2 2 - 4(2-\sqrt{2}) ( \ln \frac{11}{4} - 1)\ln 2  - 8 \ln \frac{11}{4} }{8(2+(2-\sqrt{2}) \ln 2)} >0.086 \]
and
\[g(0.41,y) > - \frac{\tfrac{3}{2}\ln^2 y}{y} -  \frac{6\ln y}{y}- \frac{3}{y} - \frac{1}{y^2} >- \frac{\tfrac{3}{2}\ln^2 1700}{1700} -  \frac{6\ln 1700}{1700}- \frac{3}{1700}  -\frac{1}{1700^2}> -0.08,\]
thus obtaining our desired contradiction. This completes the proof of Theorem \ref{thm:main}.

\section*{Acknowledgements}

\setstretch{.4}
\begin{onehalfspacing}

{\tiny This material is based upon work supported by the National Science Foundation under grant no. OAC-1835443, grant no. SII-2029670, grant no. ECCS-2029670, grant no. OAC-2103804, and grant no. PHY-2021825. We also gratefully acknowledge the U.S. Agency for International Development through Penn State for grant no. S002283-USAID. The information, data, or work presented herein was funded in part by the Advanced Research Projects Agency-Energy (ARPA-E), U.S. Department of Energy, under Award Number DE-AR0001211 and DE-AR0001222. The views and opinions of authors expressed herein do not necessarily state or reflect those of the United States Government or any agency thereof. This material was supported by The Research Council of Norway and Equinor ASA through Research Council project ``308817 - Digital wells for optimal production and drainage''. Research was sponsored by the United States Air Force Research Laboratory and the United States Air Force Artificial Intelligence Accelerator and was accomplished under Cooperative Agreement Number FA8750-19-2-1000. The views and conclusions contained in this document are those of the authors and should not be interpreted as representing the official policies, either expressed or implied, of the United States Air Force or the U.S. Government. The U.S. Government is authorized to reproduce and distribute reprints for Government purposes notwithstanding any copyright notation herein. The third author thanks Mehtaab Sawhney for interesting conversions regarding linear programming. The authors thank Louisa Thomas for improving the style of presentation.}
\end{onehalfspacing}
{ \small 
	\bibliographystyle{plain}
	\bibliography{main.bib} }
 
\newpage
\appendix

\section{The modern role of partial and complete pivoting in computation}\label{app:a}

This appendix reviews the role of the growth factor in applied computation and demonstrates the modern practical importance of both partial and complete pivoting.

\subsection{The growth factor and error estimates for solving linear systems} 
Gaussian elimination can be used to solve a linear system $A x =b$ by factoring $A = LU$ into the product of a lower triangular and upper triangular matrix $L$ and $U$. Given the factorization $A = LU$, the linear system $Ax = b$ is mathematically equivalent to $LU x = b$, which can be efficiently and accurately solved using forward and backward substitution. This procedure, when performed in floating point arithmetic with either partial or complete pivoting, produces an approximate solution $\hat x$ satisfying 
\[(A+\Delta A)\hat x = b, \qquad \|\Delta A \|_{\max} \le \tfrac{2}{1-nu}n^3 \mathrm{u} \, \rho(A),\]
where $n$ is the dimension of the matrix, $\mathrm{u}$ is the unit roundoff of the floating point arithmetic, and $\rho(A)$ is the growth factor of $A$ in floating point arithmetic under pivoting (the same bound holds for both partial and complete pivoting); see \cite[pgs. 175-177]{higham2002accuracy}  and other related formulas for details. For this reason, understanding the growth factor is of both great practical and theoretical importance. Further details regarding the long history of research in this area can be found in \cite{edelman2024some}, though we also draw special attention to the modern interest in smoothed analysis (the study of algorithms under small random perturbation to input) for Gaussian elimination \cite{sankar2004smoothed,sankar2006smoothed}.

\subsection{Is large growth for partial pivoting as rare now as it seemed in years past?}
In the classic text {\it The Algebraic Eigenvalue Problem} \cite[pg. 212]{Wilkinson1965AEP}, Wilkinson showed that Gaussian elimination with partial pivoting was unstable in the worst case. He proved that, for partial pivoting, the growth factor is bounded above by $2^{n-1}$, and that this exponential upper bound can be achieved by the matrix
\begin{equation}\label{eqn:wilk_matrix}
A = \begin{pmatrix*}[r] 1 & 0\phantom{.} & \cdots & 0 & 1 \\ -1 & \ddots & \ddots & \vdots & \vdots \\
\vdots & \ddots & 1 &  0 & 1 \\[1ex]
-1 & \cdots & -1 & 1 & 1 \\[1 ex]
-1 & \cdots & -1 & -1 & 1 \end{pmatrix*}.
\end{equation}
This exponentially large growth factor can lead to catastrophic errors, even for well-conditioned matrices. Despite this, Gaussian elimination with partial pivoting remains the premier technique used to solve a general linear system $Ax=b$ computationally. The backslash ``$\backslash$" command in MATLAB and Julia, and the linalg.solve() command in Python, all employ Gaussian elimination with partial pivoting by calling the same LAPACK routines
when faced with an arbitrary
square
matrix (different algorithms may be used when the input matrix has special structure). However, the fundamental issues originally noted by Wilkinson in 1965 still persist. In Figure \ref{fig:bad}, we attempt to solve a $100 \times 100$ linear system involving the matrix $A$ of Equation \ref{eqn:wilk_matrix} using the built-in backslash ``$\backslash$" command in the Julia programming language. Shockingly, the algorithm produces a solution with almost no correct significant digits, and no error message is output  (see \cite[Subsection 2.2]{peca2023growth} for further experiments). This is unrelated to condition number, as the condition number of $A$ when $n = 100$ is only $45$. Many researchers suspect that the situations where Gaussian elimination with partial pivoting fails (such as Figure \ref{fig:bad}) are rare. For instance, in the widely used textbook {\it Matrix Computations} by Gene Golub and Charles Van Loan, the authors explicitly discuss the worst-case stability of Gaussian elimination with partial pivoting:
\begin{quote}
Although there is still more to understand about $\rho$ [the growth factor], the consensus is that serious element growth in Gaussian elimination with partial pivoting is {\it extremely} rare. {\it The method can be used with confidence} \cite[pg. 131]{golub2013matrix}.
\end{quote}

This sentiment may have been partially true in 1983, when the first edition of {\it Matrix Computations} was released, but it is simply not true today. This could even be considered an example
of normalcy bias, the refusal to plan for a disaster which has never happened before. We contend
that there has been an exponential increase in both the quantity and types of linear systems solved today as compared to when that word
``rare" was typed (likely on an IBM Selectric Typewriter) in the years before 1983.
Furthermore, the current software environments and hardware platforms were likely unimaginable then, especially when considering the 1977 (mis)quote that nobody would ever need a computer in their home.

In the early days of numerical linear algebra, the source of most numerical analysis problems 
were discretizations of integral and differential equations. The matrix $A$ above and matrices with similarly large growth may seem unlikely to occur in practice when considering typical problems of classical numerical analysis.  
Wright \cite{wright1993collection}  and Foster \cite{Foster:1994:GEP}
found examples of two-point boundary value problems and Volterra integral equations with large growth that at least had the appearance of a classical numerical analysis problem. Still, it was easy to argue that even these examples were somewhat contrived.
Nonetheless, given the aforementioned increase in the quantity of linear systems solved, we would
not rule out large growth even amongst classical-looking numerical analysis problems.

The strongest concern, however, arises from the fact that the types of problems solved today are much more varied. The matrix $A$ above (and similar-looking matrices with exponential growth) have a high degree of symmetry and a simple combinatorial structure. The field of discrete mathematics has grown dramatically in recent decades, and with it the need to solve linear systems arising from network structures. As a result, such matrices no longer seem so unlikely to occur in practice\footnote{The third author thanks Avi Widgerson for emphasizing this point during a tutorial on Gaussian elimination presented at the Institute for Advanced Study.}. 

\begin{figure}
\noindent
\begin{quote}
\begin{quote}
\begin{minted}[linenos,autogobble]{julia}
# Wilkinson's famous matrix (in Julia)
# Matrices like wilk(n) should no longer be considered rare 
# (Subsection A.2)
wilk(n) = [ i>j ? -1 : (i==j || j==n) ? 1 : 0 for i=1:n, j=1:n]

# Demonstrating the inaccuracy of GE with partial pivoting
n = 100;
A = wilk(n);
x = randn(n); b=A*x;
[A\b x][45:55,:] # interesting middle elements

#   A\b   vs    x
  0.369141   0.370758
 -0.789062  -0.787242
 -0.835938  -0.82985
  0.78125    0.790538
 -0.875     -0.86182
  0.5625     0.595671
 -3.0625    -2.99157
  1.25       1.33367
  0.0        0.302893
  1.0        1.52312
 -1.0        0.21591
\end{minted}
\end{quote}
\end{quote}
\caption{The potential inaccuracies of partial pivoting. Surprisingly, the computed solution barely has any correct significant digits! Observing the middle elements of the exact and computed solution, one can almost feel the bits being chopped off at the end. This is not caused by the condition number, as cond($A$) is only $45$. No warning or error message is given.}
\label{fig:bad}
\end{figure}

\subsection{Consequences of modern trends in computing} Not only is the solution of $Ax=b$ more frequent and more varied, but
the problem sizes are larger and the types of hardware being used are more varied as well. It is not unusual to take advantage of hardware accelerators such as graphical processing units (GPUs),
which run at lightning speed in half precision. These two factors put further pressure on the accuracy of Gaussian elimination with partial pivoting. 

\subsection{Has complete pivoting software really been out there?}
While LAPACK's partial pivoting routine \verb+getrf+ 
and variants are the workhorse under the hood
of Julia, MATLAB, and Python, LAPACK\footnote{Jim Demmel, one of the lead authors of the LAPACK library, opted for complete pivoting over partial pivoting in his analysis of high accuracy singular value decompositions \cite{demmel1997accurate}.} has a complete pivoting routine available
\verb+getc2+,
 and complete pivoting is also available, though arguably less prominently, in other languages as native code (e.g., on MATLAB Central or in the Julia package DLA.jl). Soon, complete pivoting will become even more accessible in Julia.  The implication that a lack of accessibility of complete
pivoting in software is to be equated with a lack of user interest
is an example of confirmation bias:  a software writer that has solvers embedded in a popular package used by many people may choose the extra safety of complete pivoting if it were more accessible.  \\

Overall, our conclusion
is that stable alternatives to naive partial pivoting are needed and, therefore, a deeper mathematical understanding of both complete and partial pivoting is as important as ever.

\end{document}